\numberwithin{equation}{section}
\theoremstyle{plain}
        \newtheorem{theorem}[equation]{Theorem}
        \newtheorem{lemma}[equation]{Lemma}
        \newtheorem{proposition}[equation]{Proposition}
	\newtheorem{defi}[equation]{Definition}
\newcommand{\comments}[1]{}
\theoremstyle{definition}
        \newtheorem{remark}[equation]{Remark}
\newcommand{\bb}{\mathbb}
\def\xto#1{\xrightarrow{#1}}
\newcommand{\sbt}{\,\begin{picture}(-1,1)(-1,-3)\circle*{3}\end{picture}\ }
\title{\large THE QUASI-STATE SPACE OF A $C^*$-ALGEBRA IS A TOPOLOGICAL QUOTIENT OF THE REPRESENTATION SPACE}
\author{\normalsize Sergio A. Yuhjtman}
\begin{document}
\maketitle

\begin{abstract}
  We show that for any $C^*$-algebra $A$, a sufficiently large Hilbert space $H$ and a unit vector $\xi \in H$,
  the natural application \mbox{$rep(A:H) \xto {\theta_\xi} Q(A)$}, \mbox{$\pi \mapsto \langle \pi(-)\xi,\xi \rangle$} is a topological quotient,
  where $rep(A:H)$ is the space of representations on $H$ and $Q(A)$ the set of quasi-states, 
  i.e. positive linear functionals with norm at most $1$. This quotient might be a useful tool in the representation 
  theory of \mbox{$C^*$-algebras}. We apply it to give an interesting proof of Takesaki-Bichteler 
  duality for $C^*$-algebras which allows to drop a hypothesis.
\end{abstract}

\section{Introduction}

  $ \ \ $ The GNS construction relates positive linear functionals with cyclic representations of a $C^*$-algebra. If we take a Hilbert
  space $H$ and a unit vector $\xi \in H$, it is natural to consider 
  the map $rep(A:H) \xto {\theta_\xi} Q(A)$, $\pi \mapsto \langle \pi(-)\xi,\xi \rangle$. If
  $H$ is large enough to contain (strictly) a copy of every cyclic representation, the GNS construction 
  is essentially equivalent to the surjectivity of $\theta_\xi$. 
  Considering the $weak^*$ topology on $Q(A)$ and the correct topology in $rep(A:H)$ described below, the map $\theta_\xi$ is continuous.
  Here we show that this map is a topological quotient (theorem \ref{main}a). This property provides a more complete picture of the relationship
  between these fundamental objects of the theory. As an application, we present an interesting perspective for 
  Takesaki-Bichteler duality, that is summarized
  by the diagram in the proof of theorem \ref{takduality}. 
  Despite our proof of the duality preserves two key ingredients from Bichteler's proof
  (proposition 4.ii and first lemma in \cite{Bich}), it is conceptually more clear and it allows to drop the boundedness
  condition (definition \ref{field}, 2). From the proofs in \cite{Bich} and \cite{Tak} it is not possible to directly avoid such hypothesis.
  We also review the concept of ``field'' relevant in this context, giving a more elegant 
  definition and explaining the equivalence with the old ones. 
  Thus, conditions in theorem \ref{takduality} are significantly 
  better than those imposed to the fields in \cite{Tak} and \cite{Bich}.
  
  For unital $C^*$-algebras we also show that the state space $S(A)$ is a topological quotient of the appropriate subspace
  of $rep(A:H)$. This is \ref{main}b.

  Our application of theorem \ref{main} to Takesaki-Bichteler duality, only exploits the universal 
  property of the quotient in the case of affine scalar maps. We expect the existence of other applications
  where the involved maps are nonlinear.

\subsection{Notation}

\noindent \sbt $ $ $A$ will denote a $C^*$-algebra.

\noindent \sbt $ $ If $X$ is a Banach space, $X^*$ denote its dual.

\noindent \sbt $ $ $S(A)=\{\varphi \in A^* / \varphi \geq 0, \ \|\varphi\|=1 \}$ the state space of $A$, with the weak-* topology.

\noindent \sbt $ $ $Q(A)=\{\varphi \in A^* / \varphi \geq 0, \ \|\varphi\| \leq 1\}$, the space of quasi-states, also with the weak-* topology.

\noindent \sbt $ $ For $\varphi \in A^*$, $\varphi \geq 0$, $(\pi_\varphi, H_\varphi, \xi_\varphi)$ is the GNS triple. $\|\xi_\varphi\|^2=\|\varphi\|$.

\section{Main theorem}

\hspace{.5cm} Let $rep(A:H)$ be the set of possibly degenerate representations of $A$ on $H$, this is the set of $*$-algebra morphisms $A \to B(H)$.
Here $H$ is a Hilbert space of a dimension greater or equal than the dimension of every cyclic representation of $A$.
In case the supremum of these dimensions is finite, we will require the dimension of $H$ to be strictly larger than this number.

The relevant topology on $rep(A:H)$ is the pointwise convergence topology with respect to the $wot$, $sot$, $\sigma$-weak or $\sigma$-strong
topologies in $B(H)$. Next lemma asserts that these topologies coincide.

\begin{lemma}
 Let $\pi$ be a representation of $A$ on a Hilbert space $H$ and $(\pi_j)$ a net of such representations. Convergence
$\pi_j(a) \to \pi(a)$ for all $a \in A$ is equivalent for the $wot$, $sot$, $\sigma$-weak and $\sigma$-strong topologies on $B(H)$.
\end{lemma}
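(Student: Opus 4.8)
The plan is to exploit two standard facts about the operator topologies on $B(H)$ together with the multiplicativity of $*$-homomorphisms. First I would record the ordering of the four topologies: the $wot$ is coarser than both the $sot$ and the $\sigma$-weak topology, each of which is in turn coarser than the $\sigma$-strong topology. Hence for an arbitrary net, $\sigma$-strong convergence implies the other three, and all of them imply $wot$-convergence. Thus among the four notions of pointwise convergence for $(\pi_j)$, the $wot$ one is the weakest and the $\sigma$-strong one the strongest, so it suffices to close the circle by proving that $wot$-pointwise convergence forces $\sigma$-strong-pointwise convergence.

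The first reduction is boundedness. Since every $*$-homomorphism of $C^*$-algebras is contractive, $\|\pi_j(a)\| \le \|a\|$ and $\|\pi(a)\| \le \|a\|$ for every $j$ and every $a$; so for each fixed $a$ the net $(\pi_j(a))_j$, together with its prospective limit, lies in the ball of radius $\|a\|$ in $B(H)$. On norm-bounded subsets of $B(H)$ the $wot$ agrees with the $\sigma$-weak topology and the $sot$ agrees with the $\sigma$-strong topology (a standard fact). Consequently $wot$-pointwise convergence coincides with $\sigma$-weak-pointwise convergence, and $sot$-pointwise convergence coincides with $\sigma$-strong-pointwise convergence. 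So the four-way equivalence collapses to the single implication: $wot$-pointwise convergence implies $sot$-pointwise convergence.

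The heart of the argument, and the only place where the algebraic structure enters, is this last implication. Assuming $\pi_j(b) \to \pi(b)$ in $wot$ for every $b \in A$, I fix $a \in A$ and $\eta \in H$ and expand
\[
 \|(\pi_j(a)-\pi(a))\eta\|^2 = \langle \pi_j(a^*a)\eta,\eta\rangle - \langle \pi(a)\eta,\pi_j(a)\eta\rangle - \langle \pi_j(a)\eta,\pi(a)\eta\rangle + \langle \pi(a^*a)\eta,\eta\rangle,
\]
where I have rewritten $\pi_j(a)^*\pi_j(a) = \pi_j(a^*a)$ and $\pi(a)^*\pi(a) = \pi(a^*a)$ using multiplicativity. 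Applying the $wot$-hypothesis with $b = a^*a$ to the first term, and with $b = a$ to the two cross terms (through the vectors $\eta$ and $\pi(a)\eta$), every term converges to $\langle \pi(a^*a)\eta,\eta\rangle = \|\pi(a)\eta\|^2$, and the four limits cancel to $0$. Hence $\pi_j(a)\eta \to \pi(a)\eta$ in norm for all $\eta$, i.e.\ $\pi_j(a) \to \pi(a)$ in $sot$.

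I expect the main obstacle to be conceptual rather than technical: recognizing that $wot$- and $sot$-pointwise convergence, which differ sharply for general nets of operators, are forced to coincide here precisely because each $\pi_j$ preserves products and adjoints, so that the $sot$-defect $\|(\pi_j(a)-\pi(a))\eta\|^2$ is expressible entirely through $wot$-quantities. The boundedness step and the passage to the $\sigma$-topologies are then routine appeals to standard facts.
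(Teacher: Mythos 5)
Your proof is correct and complete: the reduction to boundedness via contractivity of $*$-homomorphisms, the identification of $wot$ with $\sigma$-weak and $sot$ with $\sigma$-strong on bounded sets, and the expansion of $\|(\pi_j(a)-\pi(a))\eta\|^2$ into $wot$-convergent terms using $\pi_j(a)^*\pi_j(a)=\pi_j(a^*a)$ all check out. The paper itself gives no proof of this lemma (it only cites Bichteler, page 90), and your argument is the standard one used there, so there is nothing to flag.
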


See \cite{Bich} for the proof (page 90). 
In other words, the topology we consider on $rep(A:H)$ is that inherited from the product topology on $B(H)^A$, where the topology on
$B(H)$ can equivalently be the $\sigma$-weak, $\sigma$-strong, $wot$ or $sot$. It is Hausdorff because it is a subspace of a product
of Hausdorff spaces.

For the proof of theorem \ref{main} we will need the following simple geometrical lemma.

\begin{lemma} \label{lemita}
 Let $H$ be a Hilbert space and $\alpha, \beta \in H$ unit vectors. Then there is a unitary
$U_{\alpha \to \beta}$ such that $U_{\alpha \to \beta}(\alpha)=\beta$ and
$\|U_{\alpha \to \beta}-Id\|=\|\alpha-\beta\|$.
\end{lemma}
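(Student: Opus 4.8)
The plan is to split the claimed equality into a trivial inequality and an explicit construction attaining it, after reducing the whole problem to the subspace $P$ spanned by $\alpha$ and $\beta$, which has dimension at most two.

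First I would dispose of the lower bound, which requires no hypothesis on $U$ at all. For \emph{any} bounded operator $T$ with $T\alpha=\beta$ we have
\[
\|T-Id\|\ \ge\ \|(T-Id)\alpha\|\ =\ \|\beta-\alpha\|,
\]
simply because $\alpha$ is a unit vector. In particular this holds for every unitary carrying $\alpha$ to $\beta$, so the entire content of the lemma is to \emph{construct} one for which equality is achieved.

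Next I would reduce to a two-dimensional problem. Let $P=\mathrm{span}\{\alpha,\beta\}$ and require that $U$ restrict to the identity on $P^\perp$. Since both $U$ and $Id$ preserve the orthogonal decomposition $H=P\oplus P^\perp$, the difference $U-Id$ is block-diagonal, so its norm is the maximum of the norms of its two blocks, and the block on $P^\perp$ is zero. Hence $\|U-Id\|=\|(U-Id)|_P\|$, and it suffices to build $U|_P$. For the construction I write $c=\langle\beta,\alpha\rangle$ and decompose $\beta=c\alpha+s\gamma$ with $\gamma\perp\alpha$ a unit vector and $s=\|\beta-c\alpha\|\ge 0$, so that $|c|^2+s^2=1$. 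If $s=0$ then $\beta=c\alpha$ with $|c|=1$, $P$ is one-dimensional, and I set $U|_P=c\cdot Id$; then $\|(U-Id)|_P\|=|c-1|=\sqrt{2-2\,\mathrm{Re}\,c}=\|\alpha-\beta\|$. If $s>0$, then $(\alpha,\gamma)$ is an orthonormal basis of the plane $P$, and I define $U|_P$ by the matrix
\[
\begin{pmatrix} c & -s \\ s & \bar c \end{pmatrix},
\]
which is unitary and sends $\alpha\mapsto\beta$. A direct computation then gives
\[
(U-Id)^*(U-Id)\ =\ (2-2\,\mathrm{Re}\,c)\,Id_P,
\]
whence $\|(U-Id)|_P\|=\sqrt{2-2\,\mathrm{Re}\,c}=\|\alpha-\beta\|$, as required.

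The one genuinely delicate point is the complex phase of $c$: over $\mathbb{C}$ the real $2$-plane through $\alpha$ and $\beta$ is not a complex subspace, so a naive ``rotation carrying $\alpha$ to $\beta$'' need not be $\mathbb{C}$-linear, and it is not a priori clear that the norm of $U-Id$ comes out exactly to $\|\alpha-\beta\|$. The virtue of the matrix above is that $U-Id$ turns out to be a scalar multiple of a unitary, i.e. its Gram operator $(U-Id)^*(U-Id)$ is scalar; this is precisely what forces the operator norm to equal $\|\alpha-\beta\|$ on the nose rather than something strictly larger, and verifying this scalar identity is the heart of the argument.
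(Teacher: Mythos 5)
Your proposal is correct and is essentially the paper's own argument: both construct the unitary as the identity on the orthogonal complement of the complex plane spanned by $\alpha,\beta$ and as the $2\times 2$ unitary $\begin{pmatrix} c & -\bar s \\ s & \bar c\end{pmatrix}$ on that plane, the only cosmetic difference being that you normalize $s\ge 0$ and verify the norm via the scalar Gram operator $(U-Id)^*(U-Id)=(2-2\,\mathrm{Re}\,c)\,Id_P$, whereas the paper phrases the same computation as the two identities $\langle \alpha-\beta,\alpha'-\beta'\rangle=0$ and $\|\alpha-\beta\|=\|\alpha'-\beta'\|$ and then expands $\|x-Ux\|^2$.
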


\begin{proof}
 In case $\beta = k \alpha$ for $k \in \bb C$, then $|k|=1$ and $U_{\alpha \to \beta}:=k.Id$. 
Otherwise, we define $U_{\alpha \to \beta}$ as the identity on $[\alpha]^\perp \cap [\beta]^\perp=[\alpha,\beta]^\perp$.
On the subspace $[\alpha,\beta]$
we take an orthonormal basis $(\alpha,\alpha')$. Write $\beta=r\alpha + s \alpha'$, and \mbox{$\beta':=-\overline{s}\alpha +\overline{r} \alpha'$}, obtaining
an orthonormal basis $(\beta,\beta')$.
Now define $U_{\alpha \to \beta}|_{[\alpha,\beta]}$ by $\alpha \mapsto \beta$, $\alpha' \mapsto \beta'$. The following two identities
are easy to check:
$$\langle \alpha - \beta, \alpha' - \beta' \rangle=0$$
$$\| \alpha - \beta \|= \| \alpha' - \beta'\|$$
For $x \in H$, let $\lambda \alpha + \mu \alpha'$ be the projection of $x$ to $[\alpha,\beta]$. We have:

$$\|x-U_{\alpha \to \beta}(x)\|^2=\|\lambda \alpha + \mu \alpha' - \lambda \beta - \mu \beta'\|^2=$$
$$=\| \lambda(\alpha-\beta)+\mu(\alpha' - \beta')\|^2=(|\lambda|^2+|\mu|^2)\|\alpha-\beta\|^2$$

So $\|x-U_{\alpha \to \beta}(x)\|=\|\alpha-\beta\|.\|p_{[\alpha,\beta]}(x)\| \leq \|\alpha-\beta\|.\|x\|$.
\end{proof}

The proof of our main theorem makes use of the following proposition by Bichteler. This is proposition 4) (ii) in \cite{Bich}.

\begin{proposition} \label{key}
 Let $A$ be a unital $C^*$-algebra and $H$ a Hilbert space large enough to contain a copy of any cyclic representation of $A$.
 Let $\varphi \in Q(A)$, $\pi \in rep(A:H)$, $\xi \in \pi(1)H$ such that $\langle \pi(-)\xi,\xi \rangle=\varphi$. 
 
 \noindent For every \mbox{$V \subset rep(A:H)$} and $W \subset H$
 open neighborhoods of $\pi$ and $\xi$ respectively, there is an open neighborhood $U$ of $\varphi$ such that for every $\varphi' \in U$
 there are $\pi' \in V$, $\xi' \in W \cap \pi'(1)H$ satisfying \mbox{$\langle \pi'(-)\xi',\xi' \rangle=\varphi'$}.
\end{proposition}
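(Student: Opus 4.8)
The plan is to realize $\varphi'$ \emph{exactly} as a vector state inside $H$, using a copy of its GNS representation, while keeping the new data close to $(\pi,\xi)$; the coupling between ``exact'' and ``close'' is controlled by the weak-$*$ proximity of $\varphi'$ to $\varphi$ through a comparison of Gram matrices. First I would reduce to basic neighborhoods: a basic neighborhood $V$ of $\pi$ is determined (in the $sot$, by the first lemma above) by finitely many elements $a_1,\dots,a_n\in A$, finitely many test vectors $\zeta_1,\dots,\zeta_p\in H$ and a tolerance $\epsilon>0$, and $W=\{\eta:\|\eta-\xi\|<\delta\}$. Let $K=\overline{\pi(A)\xi}$, a $\pi$-invariant subspace on which $\pi$ is unitarily equivalent to $\pi_\varphi$ with $\xi\leftrightarrow\xi_\varphi$; decompose each $\zeta_l=\zeta_l^{K}+\zeta_l^{\perp}$ along $H=K\oplus K^{\perp}$. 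Using cyclicity of $\xi$ for $K$, I approximate each $\zeta_l^{K}$ by $\pi(b_l)\xi$ for suitable $b_l\in A$, and then enlarge the finite set of algebra elements to a $*$-closed set $\mathcal F\subseteq A$ containing $1$, all $a_i$, all $b_l$ and all products $a_ib_l$.

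Next I fix the neighborhood $U$ of $\varphi$ by the finitely many linear conditions $|\varphi'(d^*c)-\varphi(d^*c)|<\eta$ for $c,d\in\mathcal F$, where $\eta>0$ is to be chosen at the end. For $\varphi'\in U$, let $(\pi_{\varphi'},H_{\varphi'},\xi_{\varphi'})$ be the GNS triple. The point of $\mathcal F$ is that the family $\{\pi_{\varphi'}(c)\xi_{\varphi'}\}_{c\in\mathcal F}\subseteq H_{\varphi'}$ has Gram matrix $[\varphi'(d^*c)]_{c,d}$, which lies within $\eta$ of the Gram matrix $[\varphi(d^*c)]_{c,d}$ of the family $\{\pi(c)\xi\}_{c\in\mathcal F}\subseteq K$. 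From this near-coincidence of finite Gram matrices I produce a linear map between the two finite-dimensional spans that carries $\pi_{\varphi'}(c)\xi_{\varphi'}$ to within $O(\sqrt{\eta})$ of $\pi(c)\xi$ (e.g. by matching Gram--Schmidt outputs, or via a polar/square-root correction of the matrix identifying the two families), and which, after a small correction, is an exact isometry on the span. In particular it sends $\xi_{\varphi'}=\pi_{\varphi'}(1)\xi_{\varphi'}$ to within $O(\sqrt{\eta})$ of $\xi$.

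I would then extend this finite matching to a global isometry $S:H_{\varphi'}\to H$, defining $\pi':=S\pi_{\varphi'}(-)S^{*}$ on $\operatorname{ran}S$ and $\pi':=\pi$ on a $\pi$-invariant complement chosen to contain the vectors $\zeta_l^{\perp}$ and their $\pi$-orbits, and setting $\xi':=S\xi_{\varphi'}$. By construction $\langle\pi'(-)\xi',\xi'\rangle=\varphi'$ exactly and $\xi'\in\pi'(1)H$; moreover $\|\xi'-\xi\|=O(\sqrt{\eta})$, and for each $i,l$ the vector $\pi'(a_i)\zeta_l$ differs from $\pi(a_i)\zeta_l$ by $O(\sqrt{\eta})$ --- on the $\zeta_l^{\perp}$ part because $\pi'=\pi$ there, and on the $\zeta_l^{K}\approx\pi(b_l)\xi$ part because $\pi'(a_i)\pi(b_l)\xi$ corresponds under $S$ to $\pi_{\varphi'}(a_ib_l)\xi_{\varphi'}$, which the Gram matching places near $\pi(a_ib_l)\xi=\pi(a_i)\pi(b_l)\xi$. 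Choosing $\eta$ small enough makes all these deviations $<\epsilon$ and $<\delta$, so $\pi'\in V$ and $\xi'\in W$. (If one insists on the cyclic vector being a prescribed unit vector, Lemma \ref{lemita} provides a unitary $U_{\xi'\to\xi}$ with $\|U_{\xi'\to\xi}-Id\|=\|\xi'-\xi\|$, and replacing $\pi'$ by $U_{\xi'\to\xi}\pi'U_{\xi'\to\xi}^{*}$ moves $\xi'$ onto $\xi$ while perturbing each $\pi'(a)$ by at most $2\|\xi'-\xi\|\,\|\pi'(a)\|$.)

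The main obstacle is the construction of the \emph{single} isometry $S$ that simultaneously (i) matches the finite Gram data, so that $\xi'$ lands near $\xi$ and $\pi'$ tracks $\pi$ on the cyclic-direction test data, and (ii) parks the rest of $H_{\varphi'}$ inside $H$ orthogonally to the remaining test data, so that $\pi'$ can be left equal to $\pi$ there. This is precisely where the three hypotheses must be combined: weak-$*$ closeness gives the Gram control, cyclicity of $\xi$ tames the in-$K$ test components, and the largeness of $H$ supplies the dimensional room for $S$. The dimension bookkeeping is most delicate when the cyclic dimensions are bounded, and it is exactly here that the strict inequality between $\dim H$ and the supremum of the cyclic dimensions is used.
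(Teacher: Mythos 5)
This proposition is not actually proved in the paper: it is imported verbatim from Bichteler (\cite{Bich}, proposition 4(ii)), so there is no internal proof to compare yours against, and your attempt has to be judged on its own terms. The Gram-matrix half of your plan is sound: $*$-closing the finite set $\mathcal F$, controlling $|\varphi'(d^*c)-\varphi(d^*c)|$, and converting closeness of the two Gram matrices into an almost-isometric matching of the finite families is a correct and standard device (safest via square roots of the Gram matrices plus a polar or dilation correction; raw Gram--Schmidt is discontinuous at nearly dependent families, but you do mention the polar route).

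The genuine gap is the ``parking'' step. For $\pi'$ to be a representation which equals $\pi$ off $\operatorname{ran}S$, the complement $M$ must be $\pi$-invariant and contain every $\zeta_l^{\perp}$; hence $M$ contains the closed $\pi$-invariant subspace generated by the $\zeta_l^{\perp}$, and $\operatorname{ran}S$ must fit inside $H\ominus M$. Nothing bounds $\dim(H\ominus M)$ from below: it can be as small as $\dim K=\dim H_\varphi$, while $\dim H_{\varphi'}$ can jump far above it for $\varphi'$ arbitrarily weak-$*$ close to $\varphi$. Concretely, let $H$ be any infinite-dimensional Hilbert space, $A=\mathcal{K}(H)+\mathbb{C}1$ (every cyclic representation of $A$ then has dimension at most $\dim H$, so the hypothesis on $H$ holds), identify $H\cong\mathbb{C}\oplus H$, and take $\pi=\chi\oplus\pi_{std}$ ($\chi$ the character killing the compacts, $\pi_{std}$ the identity representation), $\xi=(1,0)$, so $\varphi=\chi$ and $K=\mathbb{C}\oplus 0$; let $V$ use the single test vector $\zeta=(0,e_1)$. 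Since $\pi_{std}$ is irreducible, the smallest $\pi$-invariant subspace containing $\zeta^{\perp}=\zeta$ is $0\oplus H$, so your $S$ would have to be an isometry with range inside the one-dimensional space $\mathbb{C}\oplus 0$; but every weak-$*$ neighborhood of $\chi$ contains $\varphi'_t=(1-t)\chi+t\langle(-)e_1,e_1\rangle$, whose GNS space contains a copy of $\pi_{std}$ and is infinite dimensional. So the construction cannot run, and no largeness assumption on $H$ repairs this, because the obstruction is the relative position of the test vectors, not total dimension; note also that the strict-dimension hypothesis you invoke in your last paragraph is not a hypothesis of this proposition at all (it belongs to Proposition \ref{saraza}). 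The statement is nevertheless true in this example: $\pi'=\pi$ together with the tilted vector $\xi'=(\sqrt{1-t},\sqrt{t}\,e_1)$ works. That is exactly the idea your proof is missing: one cannot freeze $\pi$ on the $\pi(A)$-invariant subspace generated by the off-$K$ test components; a correct argument must re-use or perturb the action of $\pi$ on $K^{\perp}$, exploiting that only the finitely many vectors $\pi(a_i)\zeta_l$ need to be reproduced approximately --- invariance relative to the finite set $\{a_i\}$, not $\pi(A)$-invariance.
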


In the following proposition we manage to keep fixed the vector $\xi'$ in previous statement. We require $H$ to contain 
strictly a copy of any cyclic representation of $A$. Of course, this detail only makes a difference when $H$ is finite dimensional.
Besides, we need $\pi(1)H \subsetneq H$.

\begin{proposition} \label{saraza}
 Let $A$ be a unital $C^*$-algebra and $H$ a Hilbert space of dimension $d$, greater or equal than the dimension of
any cyclic representation of $A$, plus $1$.
Let $\xi \in H$ be a unit vector. Then, for every $\pi \in rep(A:H)$ such that $\pi(1)H \subsetneq H$
and $V \ni \pi$ open, $\theta_\xi(V)$ is a neighborhood of $\theta_\xi(\pi)=\varphi$.
\end{proposition}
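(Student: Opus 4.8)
The plan is to exhibit an open neighbourhood $U$ of $\varphi$ contained in $\theta_\xi(V)$, i.e.\ to realize every quasi-state close to $\varphi$ as $\theta_\xi(\pi'')$ for some $\pi''\in V$. Write $\xi=\xi_0+\xi_1$ with $\xi_0=\pi(1)\xi\in\pi(1)H$ and $\xi_1=(1-\pi(1))\xi\in(\pi(1)H)^\perp$. Since $\pi(a)=\pi(1)\pi(a)\pi(1)$ for all $a$, one checks $\varphi=\theta_\xi(\pi)=\langle\pi(-)\xi_0,\xi_0\rangle$, with $\|\xi_0\|^2=\varphi(1)=\|\varphi\|$ and $\|\xi_1\|^2=1-\|\varphi\|$. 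I would first apply Proposition \ref{key} to the triple $(\varphi,\pi,\xi_0)$: given a basic open $V_0\subseteq V$ around $\pi$ and an open $W$ around $\xi_0$, it produces an open $U\ni\varphi$ such that each $\varphi'\in U$ has the form $\langle\pi'(-)\xi',\xi'\rangle$ with $\pi'\in V_0$ and $\xi'\in W\cap\pi'(1)H$. The vector $\xi'$ is not under our control, so the second step transports it back onto the fixed $\xi$ by a unitary conjugation.

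For a unitary $U$ set $\pi'':=U\pi'(-)U^*$. Using $\pi'(a)=\pi'(1)\pi'(a)\pi'(1)$ one gets $\theta_\xi(\pi'')=\langle\pi'(-)\,\pi'(1)U^*\xi,\,\pi'(1)U^*\xi\rangle$, which equals $\varphi'$ as soon as $\pi'(1)U^*\xi=\xi'$. I therefore look for a unit vector $\eta:=U^*\xi$ with $\pi'(1)\eta=\xi'$, that is $\eta=\xi'+\zeta$ with $\zeta\in(\pi'(1)H)^\perp$; the requirement $\|\eta\|=\|\xi\|=1$ forces $\|\zeta\|^2=1-\|\xi'\|^2=1-\|\varphi'\|$. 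Here the hypothesis $\pi(1)H\subsetneq H$ enters: fixing once and for all a unit vector $e\in(\pi(1)H)^\perp$, taking $e=\xi_1/\|\xi_1\|$ when $\xi_1\neq0$, I put $\zeta:=\sqrt{1-\|\varphi'\|}\,\dfrac{(1-\pi'(1))e}{\|(1-\pi'(1))e\|}$, which lies in $(\pi'(1)H)^\perp$ and has the correct norm, so $\eta$ is indeed a unit vector. Finally I take $U:=U_{\eta\to\xi}$ from Lemma \ref{lemita}, so that $U^*\xi=\eta$ and $\|U-Id\|=\|\eta-\xi\|$; by construction $\theta_\xi(\pi'')=\varphi'$.

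It remains to guarantee $\pi''\in V$, which I would obtain by keeping $\|\eta-\xi\|$ small. Since $\eta-\xi=(\xi'-\xi_0)+(\zeta-\xi_1)$, I bound $\|\eta-\xi\|\le\|\xi'-\xi_0\|+\|\zeta-\xi_1\|$. The first summand is small because $\xi'\in W$. For the second, note $\xi_1=\sqrt{1-\|\varphi\|}\,e$ in both cases, while $\|\varphi'\|=\varphi'(1)\to\varphi(1)=\|\varphi\|$ weak-$*$ continuously and $(1-\pi'(1))e\to(1-\pi(1))e=e$ as $\pi'\to\pi$ on the single vector $e$; hence $\zeta\to\sqrt{1-\|\varphi\|}\,e=\xi_1$. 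Thus, by shrinking $W$ around $\xi_0$, shrinking $V_0$ around $\pi$ (so that also $(1-\pi'(1))e\neq0$), and shrinking $U$ so that $\varphi'(1)$ is close to $\varphi(1)$, I can force $\|\eta-\xi\|$ below any prescribed $\delta$. Finally, on a basic neighbourhood $V=\{\rho:\|\rho(a_i)v_j-\pi(a_i)v_j\|<\varepsilon\}$ one estimates $\|\pi''(a_i)v_j-\pi(a_i)v_j\|\le 2\|a_i\|\,\|v_j\|\,\|U-Id\|+\|\pi'(a_i)v_j-\pi(a_i)v_j\|$, using that representations are contractive and $\|U^*-Id\|=\|U-Id\|$; the first term is controlled by $\delta$ and the second by $\pi'\in V_0$, so $\pi''\in V$ and $\varphi'=\theta_\xi(\pi'')\in\theta_\xi(V)$.

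The delicate point is the construction of $\zeta$: it must live in the \emph{moving} complement $(\pi'(1)H)^\perp$ (nonzero thanks to $\pi(1)H\subsetneq H$ together with the closeness of $\pi'(1)$ to $\pi(1)$ on $e$), carry precisely the norm that turns $\eta$ into a unit vector, and at the same time stay close to $\xi_1$ so that the conjugating unitary is close to the identity, all while $\pi'$ is only known to lie in $V_0$. Reserving the vector $e$ and normalizing its projection $(1-\pi'(1))e$ is what reconciles these three demands; everything else is the routine continuity bookkeeping indicated above.
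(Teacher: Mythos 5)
Your proof is correct, and while it shares the paper's overall skeleton (Bichteler's Proposition \ref{key} followed by a unitary rotation via Lemma \ref{lemita}), it resolves the central technical difficulty --- producing a vector orthogonal to $\pi'(1)H$ that completes $\xi'$ to a unit vector close to $\xi$ --- by a genuinely different device. The paper arranges this orthogonality in advance: it chooses an intermediate subspace $H'$ with $\pi(1)H \subseteq H' \subseteq [\xi-\pi(1)\xi]^\perp$ and $H' \subsetneq H$, still large enough to contain every cyclic representation (this is exactly where the ``plus $1$'' in the dimension hypothesis is consumed), and applies Proposition \ref{key} inside $rep(A:H')$, so that the fixed vector $\xi - \pi(1)\xi$ is automatically orthogonal to every $\pi'(1)H' \subseteq H'$; the completing vector is then taken along $H'^\perp$ or along $\xi-\pi(1)\xi$, in two separate cases. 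You instead apply Proposition \ref{key} on $H$ itself and let the complement move with $\pi'$: you reserve $e \in (\pi(1)H)^\perp$ (this is where you use $\pi(1)H \subsetneq H$), project it to $(1-\pi'(1))e$, normalize, and keep control by adjoining the pair $(1,e)$ to the basic neighborhood $V_0$ --- legitimate since $A$ is unital and the topology on $rep(A:H)$ is pointwise $sot$-convergence over all of $A$, so $\|\pi'(1)e\|<\epsilon'$ is an admissible basic condition. Your route treats the cases $\xi \in \pi(1)H$ and $\xi \notin \pi(1)H$ uniformly and, as a side benefit, never needs the ``plus $1$'' hypothesis within this proposition (only $\pi(1)H \subsetneq H$ plus $H$ large enough for Proposition \ref{key}); the paper's route buys automatic orthogonality with no need to monitor $\pi'(1)e$, at the cost of the dimension bookkeeping for $H'$. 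Your remaining estimates --- $\|\zeta-\xi_1\|$ small via $|\varphi'(1)-\varphi(1)|<\epsilon_1$ and $\|\pi'(1)e\|<\epsilon'$, and the final bound $\|\pi''(a_i)v_j-\pi(a_i)v_j\| \leq 2\|a_i\|\,\|v_j\|\,\|U-Id\|+\|\pi'(a_i)v_j-\pi(a_i)v_j\|$ --- are the same kind of bookkeeping the paper performs and are correct as stated.
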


\begin{proof}
 We might assume $V=\{\pi' \in rep(A:H) / \|\pi'(a_i)\xi_j - \pi(a_i)\xi_j\| < \epsilon\}$ for finite families $(a_i)$, $(\xi_j)$ with $\|a_i\|=\|\xi_j\|=1$.
 Let $H_\pi:=\pi(1)H=\overline{\pi(A)H}$. Let $\eta=\pi(1)\xi$. We have $\xi - \eta \perp H_\pi$. Take $H' \subsetneq H$ such 
 that $H_\pi \subset H' \subset [\xi-\eta]^\perp$ and $H'$ contains a copy of every cyclic representation of $A$. Now take 
 $$V':=\{\pi' \in rep(A:H') / \|\pi'(a_i)\xi_j - \pi(a_i)\xi_j\| < \frac{\epsilon}{2}\}$$ 
 (we assume $\pi'(a)(H'^\perp)=0$ for $\pi' \in rep(A:H')$, so $V'$ is an open subset of $rep(A:H')$ containing $\pi$).
 
 According to proposition \ref{key}, if we take $W \subset H'$ the $\delta$-ball centered at $\eta$, 
 there is an open set $U \ni \varphi$ such that for any $\varphi' \in U$ there is a $\pi' \in V'$ and
 $\eta' \in W \cap \pi'(1)H'$ satisfying $\langle \pi'(-)\eta',\eta' \rangle =\varphi'$. (Note: we can choose $U$ such that $|\varphi'(1)-\varphi(1)|<\epsilon_1$ $\forall \varphi' \in U$,
 so $\Big\||\varphi'\|-\|\varphi\|\Big|<\epsilon_1$).
 Now we only need to rotate $\pi'$ slightly by a unitary $U$ in such a way that $\langle U^{-1}\pi'(-)U\xi,\xi \rangle =\varphi'$.
 The image of $\xi$ by $U$ must be a unit vector $\xi'$ close to $\xi$ such that \mbox{$\xi'-\eta' \perp \pi'(1)H$}.

 In case $\xi=\eta$, since $\|\eta'\| \leq 1$,
we can take $v \in H'^\perp$ such that $\|\eta'+v\|=1$, and define $\xi':=\eta'+v$. We have
$$\|\xi-\xi'\|^2=\|\eta-(\eta'+v)\|^2=\|\eta-\eta'\|^2+\|v\|^2= \|\eta-\eta'\|^2 + 1 - \|\eta'\|^2$$

Since $\|\eta-\eta'\| < \delta$ we have $\|\eta'\|>1-\delta$, and we can easily make (taking $\delta$ sufficiently small) $\|\xi-\xi'\|<\epsilon_2$, to be determined.

In case $\xi \neq \eta$, we take $\xi'= \eta'+\lambda(\xi-\eta)$. To determine $\lambda$:
$$\|\xi'\|^2=|\lambda|^2.\|\xi-\eta\|^2+\|\eta'\|^2=|\lambda|^2.(1-\|\eta\|^2)+\|\varphi'\|=|\lambda|^2.(1-\|\varphi\|)+\|\varphi'\|$$
so we choose $\lambda = (\frac{1-\|\varphi'\|}{1-\|\varphi\|})^{\frac{1}{2}}$ to obtain $\|\xi'\|=1$. Since $\|\varphi'\|$ is arbitrarily close to $\|\varphi\|$,
 $\lambda$ is arbitrarily close to $1$ and therefore $\xi'$ is arbitrarily close to $\xi$ ($\|\xi-\xi'\|<\epsilon_2$) as long as $\delta$ is
sufficiently small.

  Now, having $\xi'$ we just apply $U:=U_{\xi \to \xi'} \in U(H)$ (lemma \ref{lemita}), and take $\pi''(-):=U^{-1}\pi'(-)U$. 
  Since $\pi' \in V$ and $\|U-Id\| = \|\xi' - \xi\|<\epsilon_2$, we have:
  
  $$\|\pi''(a_i)\xi_j - \pi(a_i)\xi_j\|<\|\pi''(a_i)\xi_j - \pi'(a_i)\xi_j\|+ \frac{\epsilon}{2}$$
  
 \noindent and  
  $$\|\pi''(a_i)\xi_j - \pi'(a_i)\xi_j\|=\|U^{-1}\pi'(a_i)U\xi_j-\pi'(a_i)\xi_j\| \leq$$
  $$\|U^{-1}\pi'(a_i)U\xi_j - U^{-1}\pi'(a_i)\xi_j\| + \|U^{-1}\pi'(a_i)\xi_j - \pi'(a_i)\xi_j\| < 2\epsilon_2 < \frac{\epsilon}{2}$$
for $\epsilon_2 < \frac{\epsilon}{4}$. So we get $\pi'' \in V$ and $\theta_\xi(\pi'')=\varphi'$.
\end{proof}

\begin{theorem} \label{main}
 Let $A$ be a $C^*$-algebra and $H$ a Hilbert space of dimension $d$, large enough to contain strictly a copy of
any cyclic representation of $A$.
Let $\xi \in H$ be a unit vector. Then, 

(a) the application
$$rep(A:H) \stackrel{\theta_\xi}{\longrightarrow} Q(A)$$
$$\hspace{70pt} \pi \longmapsto \langle \pi(-)\xi,\xi \rangle$$
is a quotient map.

(b) for unital $A$, the restriction $rep_{\xi}(A:H) \xto{\theta_\xi} S(A)$ is a quotient, where

\noindent $rep_{\xi}(A:H)=\{\pi \in rep(A:H) / \xi \in \pi(1)H\}$.

\end{theorem}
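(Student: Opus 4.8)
The plan is to check the standard criterion for a quotient map: a continuous surjection carries \emph{saturated} open sets (those equal to $\theta_\xi^{-1}(\theta_\xi(V))$) to open sets. Continuity of $\theta_\xi$ is immediate, because a net $\pi_j \to \pi$ in $rep(A:H)$ means $\langle \pi_j(a)\xi,\xi\rangle \to \langle\pi(a)\xi,\xi\rangle$ for every $a$ (here I use the first lemma to fix the topology of $rep(A:H)$), and this is exactly $weak^*$ convergence of the images in $Q(A)$. Surjectivity I obtain from GNS together with the strictness of $H$: given $\varphi$, I strictly embed $H_\varphi$ into $H$, realize $\varphi$ as $\langle\rho(-)\eta_0,\eta_0\rangle$ for $\rho=\pi_\varphi\oplus 0$ and $\eta_0=\xi_\varphi$ (of norm $\sqrt{\|\varphi\|}$), enlarge $\eta_0$ to a unit vector $\xi''=\eta_0+\zeta$ with $\zeta\perp \overline{\rho(A)H}$ and $\|\zeta\|^2=1-\|\varphi\|$ (the strict embedding is exactly what leaves room for $\zeta$), and finally conjugate by the unitary $U_{\xi''\to\xi}$ of lemma \ref{lemita}.

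For unital $A$ the core is to prove that $\theta_\xi(V)$ is open whenever $V$ is saturated and open. Fix $\varphi'\in\theta_\xi(V)$. Proposition \ref{saraza} already says that $\theta_\xi$ is open at any representation with $\pi(1)H\subsetneq H$, so it suffices to exhibit \emph{one} such degenerate representative of $\varphi'$ lying in $V$. The obstacle, and the step I expect to be the crux, is that a given $\pi'\in V$ with $\theta_\xi(\pi')=\varphi'$ may be non-degenerate, $\pi'(1)H=H$, where proposition \ref{saraza} is silent. Saturation removes the obstacle: since $V=\theta_\xi^{-1}(\theta_\xi(V))$, the whole fibre $\theta_\xi^{-1}(\varphi')$ lies in $V$, so I am free to replace $\pi'$ by \emph{any} degenerate representative of $\varphi'$. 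I build one exactly as in the surjectivity step; the \emph{strict} GNS embedding forces $\pi''(1)H$ to be a proper subspace, and lemma \ref{lemita} rotates the cyclic vector onto $\xi$ without spoiling degeneracy, since a unitary sends a proper subspace to a proper subspace. Now $\pi''\in V$, proposition \ref{saraza} applies to $(\pi'',V)$, and $\theta_\xi(V)$ contains a neighbourhood of $\varphi'$. Letting $\varphi'$ vary proves $\theta_\xi(V)$ open, which gives (a) for unital $A$; this is precisely where the hypothesis that $H$ contains every cyclic representation \emph{strictly} is indispensable.

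Part (b) runs in parallel. For unital $A$ one has $\theta_\xi(\pi)\in S(A)$ iff $\langle\pi(1)\xi,\xi\rangle=1$ iff $\pi(1)\xi=\xi$ iff $\pi\in rep_\xi(A:H)$, so $\theta_\xi$ does map $rep_\xi(A:H)$ onto $S(A)$. Running the construction above for a \emph{state} $\varphi'$ (now $\|\varphi'\|=1$, so $\zeta=0$ and the unit vector $\xi''=\eta_0$ is the cyclic vector itself) yields a degenerate $\pi''$ with $\xi\in\pi''(1)H$, i.e. $\pi''\in rep_\xi(A:H)$; and the neighbourhood furnished by proposition \ref{saraza} intersects $S(A)$ in a neighbourhood of $\varphi'$ all of whose points again have representatives in $V\cap rep_\xi(A:H)$, because for states the auxiliary vector produced in the proof of \ref{saraza} stays inside the essential subspace. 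Hence the restriction $rep_\xi(A:H)\to S(A)$ is a quotient.

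Finally, the non-unital case of (a). The route I would take is to rerun the entire argument with the essential subspace $\overline{\pi(A)H}$ playing the role of $\pi(1)H$: the proofs of propositions \ref{key} and \ref{saraza} adapt verbatim, surjectivity is as above (the GNS representation of a quasi-state being non-degenerate), and the degenerate representatives are produced as before. The bookkeeping behind this is a reduction to the unitization $\tilde A=A\oplus\bb C 1$, where restriction of states is a $weak^*$ homeomorphism $S(\tilde A)\xto{\sim}Q(A)$ (each quasi-state $\varphi$ has the unique state extension $\bar\varphi(a+\lambda 1)=\varphi(a)+\lambda$); this is what legitimizes applying the unital propositions to quasi-states, once $\pi(1)H$ has been replaced by the essential subspace. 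Either way the general case of (a) follows.
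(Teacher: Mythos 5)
Your proposal is correct and follows essentially the same route as the paper: continuity and surjectivity via a strict GNS embedding (you conjugate by the unitary of Lemma \ref{lemita}, where the paper instead chooses the embedding so that the projection of $\xi$ onto the essential space is the GNS cyclic vector), openness of saturated open sets by passing to a degenerate representative of each fibre and invoking Proposition \ref{saraza}, part (b) by the observation that preimages of states automatically lie in $rep_\xi(A:H)$, and the non-unital case by reduction to the unitization through the homeomorphism $S(\widetilde A)\simeq Q(A)$. The one caveat is that your alternative suggestion to ``rerun the argument verbatim'' with $\overline{\pi(A)H}$ in place of $\pi(1)H$ is not justified as stated (Proposition \ref{key} is only available for unital $A$), but this is harmless since your unitization route --- which is exactly the paper's argument --- stands on its own.
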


\begin{proof} $ $
 Continuity of $\theta_\xi$ is trivial. Despite surjectivity may be intuitively clear from the GNS construction,
we will describe in detail a generic preimage for $\varphi \in Q(A)$.
We must embed a GNS representation of $\varphi$
in $H$ in such a way that the orthogonal projection of $\xi$ to the essential space is the cyclic vector of the GNS triple. 
To achieve this, take a unit vector 
$v$ orthogonal to $\xi$, define $\eta=\|\varphi\|\xi + (\|\varphi\|-\|\varphi\|^2)^{\frac{1}{2}}v$. 
This $\eta$ satisfies $\|\eta\|^2=\|\varphi\|$ and $\xi-\eta \perp \eta$. By hypothesis, it is possible to embed
$H_{\varphi}$ into $[\xi-\eta]^\perp$ taking $\xi_{\varphi}$ to $\eta$.
Define $\pi \in rep(A:H)$ as $\pi_{\varphi}$ through the
isometry $H_{\varphi} \hookrightarrow H$, being $0$ on the orthogonal to the image of $H_{\varphi}$. We have $\theta_\xi(\pi)=\varphi$.

Now we assume $A$ unital and postpone the general case, because we need part (b). Take $D \subset Q(A)$ such that $V:=\theta_\xi^{-1}(D)$ is open. We must see that $D$ is open to conlude 
that $\theta_\xi$ is a quotient map. Let $\varphi \in D$. 
Take a preimage $\pi$ of $\varphi$ such that $\pi(1)H \subsetneq H$. By proposition \ref{saraza}, $\theta_\xi(V)$ is a neighborhood
of $\varphi$, so $D$ is open.

(b) Clearly we have the restriction $rep_\xi(A:H) \xto {\theta_\xi} S(A)$. Furthermore $\theta_\xi^{-1}(S(A))=rep_\xi(A:H)$. 
Let $D \subset S(A)$ be a set such that $\theta_\xi^{-1}(D)$ is open in $rep_\xi(A:H)$, so $\theta_\xi^{-1}(D)=V \cap rep_\xi(A:H)$
with $V$ open in $rep(A:H)$. Let $\varphi \in D$. We take $\pi \in \theta_\xi^{-1}(\varphi)$ such that $\pi(1)H \subsetneq H$, as before. 
By proposition \ref{saraza}, $\theta_\xi(V)$ contains an open neighborhood $U \ni \varphi$, $U$ open in $Q(A)$. Now it is easy to check:
$$U \cap S(A) \subset \theta_\xi(V\cap rep_\xi(A:H)) \subset D$$

Thus, $D$ is open in $S(A)$.

Finally, we prove the general case of (a). Consider the minimal unitization $\widetilde A$. By restriction, we have a continuous
map $rep_\xi(\widetilde A:H) \xto r rep(A:H)$. Besides, the restriction $S(\widetilde A) \to Q(A)$ is a homeomorphism. We have:

$$\xymatrix{rep_\xi(\widetilde A:H) \ar[r] \ar[d]_r & S(\widetilde A) \ar[d]^{\simeq} \\
rep(A:H) \ar[r]^(.6){\theta_\xi} & Q(A)}$$
Since $\theta_\xi \circ r$ is a quotient by part (b), $\theta_\xi$ is a quotient.
\end{proof}

\begin{remark}
 In case $d$ is finite, $rep(A:H)$ is compact, so $\theta_\xi$ is a closed map. To see compactness of 
$rep(A:H)$, consider the map \mbox{$rep(A:H) \to B_1^{A_1}$}, $\pi \mapsto (a \mapsto \pi(a))$, where $B_1 \subset B(H)$ and $A_1 \subset A$ are
the respective unit balls. $B_1^{A_1}$ has the product topology of the norm topology in $B_1$, it is a compact space. 
The map is a topological subspace and the image is closed. 
\end{remark}

\section{Application to Takesaki-Bichteler duality}

\hspace{.5cm} Takesaki-Bichteler duality allows to recover an arbitrary \mbox{$C^*$-algebra} from its representation theory.
The elements of $A$ coincide with the set of certain continuous fields on $rep(A:H)$. Here we remove the
boundedness hypothesis on the fields, clarify the remaining conditions and present an elegant proof through theorem
\ref{main}.

See \cite{Fujimoto} for very interesting duality theorems with $rep(A:H)$ replaced by $Irr(A:H)$, the space 
of irreducible representations. These dualities are not only related to Gelfand duality but also to Tannaka duality
for compact groups or its generalization to locally compact groups, Tatsuuma duality.

Let us start by reviewing the concept of field that is used in Takesaki-Bichteler's theorem. 
We provide a simpler definition than those from \cite{Tak} and \cite{Bich}, and explain why it is equivalent.

\begin{defi} \label{field}
A field over $rep(A:H)$ is a map $rep(A:H) \xto T B(H)$ that satisfies:

\noindent 0) $T(0)=0$

\noindent 1) For an intertwiner $H \xto S H$ between $\pi_1$ and $\pi_2$ ($S \pi_1(a) = \pi_2(a)S$ $\forall a \in A$), it holds 
$S T(\pi_1)=T(\pi_2)S$. In other words: $T$ is compatible with intertwiners.

\noindent 2) $\{\|T (\pi)\|\}_{\pi \in rep(A:H)}$ is bounded.

\end{defi}

Clearly, every $a \in A$ induces a field.

\begin{proposition} \label{isosparciales}
 The following condition is equivalent to item 1 in previous definition.
 
\noindent 1') $T$ is compatible with intertwiners that are partial isometries. 

\end{proposition}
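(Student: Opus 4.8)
The implication $1 \Rightarrow 1'$ is immediate, since a partial isometry intertwiner is in particular an intertwiner; so the content is the converse, and the plan is to reduce an arbitrary intertwiner to partial isometries through the polar decomposition. Let $S\colon H \to H$ be an intertwiner between $\pi_1$ and $\pi_2$, i.e. $S\pi_1(a)=\pi_2(a)S$ for all $a$ (the case $S=0$ being trivial), and write its polar decomposition $S=U|S|$ with $|S|=(S^*S)^{1/2}$ and $U$ the partial isometry whose initial space is $(\ker S)^\perp=\overline{\operatorname{ran}}\,|S|$.

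First I would observe that taking adjoints in the intertwining relation shows that $S^*$ intertwines $\pi_2$ with $\pi_1$, hence $S^*S$ lies in the commutant $\pi_1(A)'$; by continuous functional calculus $|S|=(S^*S)^{1/2}\in\pi_1(A)'$ as well. A short computation on the dense subspace $\operatorname{ran}|S|$, combined with the fact that the projection onto $(\ker S)^\perp$ is a spectral projection of $|S|$ and therefore also lies in $\pi_1(A)'$ (so that $\ker S$ is $\pi_1$-invariant), then yields $U\pi_1(a)=\pi_2(a)U$ for all $a$. Thus $U$ is a partial isometry intertwiner, and condition $1'$ gives $U\,T(\pi_1)=T(\pi_2)\,U$.

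The remaining task is to show that the positive self-intertwiner $|S|\in\pi_1(A)'$ commutes with $T(\pi_1)$, and this is the step I expect to carry the weight of the argument. The key observation is that unitaries are partial isometries, and any unitary $W\in\pi_1(A)'$ is a self-intertwiner of $\pi_1$, so $1'$ already forces $W\,T(\pi_1)=T(\pi_1)\,W$. Since $\pi_1(A)'$ is a von Neumann algebra, its self-adjoint elements are real linear combinations of its own unitaries: writing $h=|S|/\|S\|$ one has $h=\tfrac12(w+w^*)$ with $w=h+i(1-h^2)^{1/2}\in\pi_1(A)'$ unitary. Hence $|S|$ commutes with $T(\pi_1)$.

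Putting the pieces together,
$$S\,T(\pi_1)=U|S|\,T(\pi_1)=U\,T(\pi_1)|S|=T(\pi_2)\,U|S|=T(\pi_2)\,S,$$
which is condition $1$. The only delicate point is the intertwining of the polar part $U$ across the kernel of $S$; everything else is the standard ``the commutant is spanned by its unitaries'' device. Note also that this reduction is purely algebraic and uses neither $T(0)=0$ nor the boundedness condition $2$.
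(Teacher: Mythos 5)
Your proof is correct, and its skeleton is the same as the paper's: reduce an arbitrary intertwiner via polar decomposition $S=U|S|$, handle the partial isometry $U$ by hypothesis $1')$, and then show that the positive self-intertwiner $|S|\in\pi_1(A)'$ commutes with $T(\pi_1)$ by expressing it in terms of \emph{unitary} self-intertwiners. The only real divergence is the device used for that last step. The paper exponentiates: for $r>0$ small, $e^{irP}$ is a unitary in $\pi_1(A)'$, hence commutes with $T(\pi_1)$ by $1')$, and then $rP$ is recovered as the logarithm of $e^{irP}$ by functional calculus, so it commutes with $T(\pi_1)$ as well. You instead normalize $h=|S|/\|S\|$ and write $h=\tfrac12(w+w^*)$ with $w=h+i(1-h^2)^{1/2}$ unitary in $\pi_1(A)'$ --- the standard ``self-adjoint element is an average of two unitaries'' trick. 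The two devices buy the same thing; yours is arguably a bit cleaner in that it is purely algebraic once the square root is formed and sidesteps the small spectral care the paper needs (choosing $r$ so that the spectrum of $e^{irP}$ stays in an arc where the logarithm is continuous), while the paper's works directly with $P$ without normalization. Your side remarks (invariance of $\ker S$ via $|S|\in\pi_1(A)'$, the $S=0$ case, and the observation that neither $T(0)=0$ nor boundedness is used) are all accurate; the paper leaves the intertwining of $U$, $P$, $S^*$ unproved, exactly the points you fill in.
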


\begin{proof}
  $1) \Rightarrow 1')$ is trivial, let us prove the converse. 
 Assume that $T$ is compatible with intertwiners that are partial isometries and take an arbitrary intertwiner $\pi_1 \xto S \pi_2$, $S \in B(H)$.
 The operator $S$ has a polar decomposition $S=UP$, where $P=(S^*S)^{\frac{1}{2}}$ and $U$ maps $(S^*S)^{\frac{1}{2}}y$
 to $Sy$ and the orthogonal complement to $0$. Since $S$ is an intertwiner, $\pi_2 \xto{S^*} \pi_1$ is an intertwiner and also are
 $\pi_1 \xto P \pi_1$ and $\pi_1 \xto U \pi_2$. $T$ is compatible with $U$ by hypothesis. 
 It only remains to prove that $T$ is compatible with any positive 
 intertwiner $P$ of a representation $\pi_1$ with itself. Taking $r>0$ small enough, $rP$ has its spectrum inside $[0,2\pi)$. $e^{irP}$ is a
 unitary equivalence, so it is compatible with $T$ (i.e. it commutes with $T(\pi_1)$). But $rP$ is the logarithm of $e^{irP}$, so $rP$
 also commutes with $T(\pi_1)$.

\end{proof}

\begin{proposition} \label{essential}
 Let $\pi \in rep(A:H)$, $p_\pi$ the orthogonal projection to the essential space of $\pi$ and $T$ a field over $rep(A:H)$. Then
 $T(\pi)=p_\pi T(\pi) p_\pi$.
\end{proposition}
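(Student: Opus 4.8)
The plan is to realize the complementary projection $q := Id - p_\pi$ as an intertwiner linking $\pi$ with the zero representation, and then to feed axioms 0) and 1) of Definition \ref{field} into the compatibility relation. The payoff of axiom 0) is precisely what forces $T(\pi)$ to be supported on the essential space.

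First I would record the two elementary identities $q\,\pi(a)=0$ and $\pi(a)\,q=0$ for all $a\in A$. The first holds because the range of each $\pi(a)$ lies in the essential space $\overline{\pi(A)H}=p_\pi H$; the second because for $\eta\perp\pi(A)H$ one has $\langle \pi(a)\eta,\zeta\rangle=\langle \eta,\pi(a^*)\zeta\rangle=0$ for every $\zeta$, whence $\pi(a)\eta=0$. These say exactly that the complement of the essential space is reducing for $\pi$.

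Reading these identities as intertwining relations, $q\,\pi(a)=0=0\cdot q$ exhibits $q$ as an intertwiner $\pi\to 0$, while $\pi(a)\,q=0=q\cdot 0$ exhibits $q$ as an intertwiner $0\to\pi$, where $0\in rep(A:H)$ is the zero representation. (Being a projection, $q$ is a partial isometry, so even the weaker condition 1') of Proposition \ref{isosparciales} would apply.) Now I would invoke compatibility twice. For $q\colon\pi\to 0$ it gives $q\,T(\pi)=T(0)\,q=0$ by axiom 0), hence $p_\pi T(\pi)=T(\pi)$; for $q\colon 0\to\pi$ it gives $T(\pi)\,q=q\,T(0)=0$, hence $T(\pi)\,p_\pi=T(\pi)$. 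Combining the two yields $T(\pi)=p_\pi T(\pi)=p_\pi T(\pi)p_\pi$, as desired.

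The only point requiring care is not to stop at the observation that $p_\pi$ itself is an intertwiner of $\pi$ with itself: that merely shows $T(\pi)$ commutes with $p_\pi$ and is far from the claim. The decisive ingredient is axiom 0), channelled through the zero representation in \emph{both} directions via $q$; this is what upgrades commutation to the two-sided annihilation $q\,T(\pi)=0=T(\pi)\,q$.
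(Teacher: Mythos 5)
Your proof is correct and takes essentially the same approach as the paper: the key step in both is realizing $q = Id - p_\pi$ as an intertwiner between $\pi$ and the zero representation and invoking axiom 0) to kill $T(0)$. The only (immaterial) difference is in the second half, where the paper deduces $T(\pi)p_\pi = T(\pi)$ by combining $T(\pi) = p_\pi T(\pi)$ with the fact that $p_\pi$ is an endomorphism of $\pi$ (so $T(\pi)$ commutes with $p_\pi$), while you instead read $q$ as an intertwiner $0 \to \pi$ and apply compatibility a second time; both finish the argument in one line.
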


\begin{proof}
  Let $p_{\pi^{\perp}} = 1 - p_\pi$, the orthogonal projection to $\overline{\pi(A)H}^\perp$. It defines an intertwiner
  $\pi \xto{p_{\pi^{\perp}}} 0$, so we have $(1-p_\pi)T(\pi)=T(0)p_{\pi^\perp}=0$, $T(\pi)=p_\pi T(\pi)$. Besides, $p_\pi$ is an endomorphism of $\pi$,
  so $p_\pi T(\pi)=T(\pi) p_\pi$.
\end{proof}

With the operations defined pointwise and the norm $\|T\|=\sup_\pi \|T(\pi)\|$, the set of fields is a $C^*$-algebra. Actually,
they form the universal von Neumann algebra of $A$ (see \cite{Tak} theorem 3, 
\cite{Bich} proposition in page 95, \cite{tesis} proposition 4.7). Recall that the universal von Neumann algebra of a \mbox{$C^*$-algebra}
$A$ can also be constructed as the bicommutant of the universal representation $\bigoplus_{\varphi \in S(A)} \pi_\varphi$
or as the bidual $A^{**}$ with the natural involution and Arens multiplication.

The definition of ``field'' by Takesaki can be summarized as follows: it is a bounded map $rep(A:H) \xto T B(H)$ with the property 
$T(\pi)= p_\pi T(\pi) p_\pi$, compatible with unitary equivalences (in the sense of our condition \ref{field}.1)
and finite direct sums. For direct sums, it is necessary to consider a unitary $H \oplus H \xto J H$, so the condition can be expressed:
$$Ad \hspace{.1cm} J \hspace{.1cm} (T(\pi_1) \oplus T(\pi_2))= T(Ad \hspace{.1cm} J \hspace{.1cm} (\pi_1 \oplus \pi_2))$$
$$\mbox{where } Ad \hspace{.1cm} J \hspace{.1cm} (-) := J(-)J^*$$
Our defintion is stronger because we have compatibility with arbitrary intertwiners and \ref{essential}. 
The converse can be done through proposition \ref{isosparciales}:
a field compatible with direct sums and unitary equivalences will be compatible with intertwiners that are partial isometries.
We prefer not to write down the details. Actually, it is technically unnecessary, since we already know that both definitions 
lead to the enveloping von Neumann algebra of $A$.

\bigskip

 Takesaki-Bichteler duality asserts that a $C^*$-algebra can be recovered as the set of continuous fields $rep(A:H) \to B(H)$, where the 
topology on $B(H)$ might be the $\sigma$-weak, $\sigma$-strong, $wot$ or $sot$. Elements in $A$ clearly induce continuous fields for 
all these topologies on $B(H)$. Since $wot$ is the weakest among these, we have that $sot$-continuous, \mbox{$\sigma$-weak-continuous}
and \mbox{$\sigma$-strong-continuous} fields are $wot$-continuous. 
Hence, it will suffice to prove that \mbox{$wot$-continuous} fields are elements of $A$.

In order to deduce the duality theorem from theorem \ref{main}, we need the following lemma taken from Bichteler's article 
(\cite{Bich}, first lemma, parts (iii) and (iv)).

Recall that any Banach space $V$ can be recovered from the bidual as those elements $V^* \to \bb C$ that are continuous 
for the $w^*$-topology. This lemma in particular says that for a $C^*$-algebra $A$ it suffices with continuity on $Q(A)$ 
instead of all $A^*$.

\begin{defi}
 Let $AN_0(Q(A))$ be the set of affine bounded $\bb C$-valued functions on $Q(A)$ taking the value $0$ at $0$.
It is a normed space for the supremum norm. $AC_0(Q(A))$ will be the subspace of $AN_0(Q(A))$ of continuous functions.
\end{defi}

\begin{lemma} \label{lema}
 There is a Banach space isomorphism $A^{**} \to AN_0(Q(A))$ that restricts to a bijection $A \to AC_0(Q(A))$.
\end{lemma}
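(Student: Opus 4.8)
The plan is to realize the map concretely as restriction of functionals. Every $F \in A^{**} = (A^*)^*$ restricts to a function $F|_{Q(A)}$ on the quasi-state space; since $F$ is $\bb C$-linear and $Q(A)$ is convex with $0 \in Q(A)$, this restriction is affine, takes the value $0$ at $0$, and is bounded by $\|F\|$ on $Q(A)$, so it lies in $AN_0(Q(A))$. I would first check that $F \mapsto F|_{Q(A)}$ is injective: if $F$ vanishes on $Q(A)$ then, by positive homogeneity, on all positive functionals, hence on $A^*_{sa}$ by the Jordan decomposition, and finally on $A^* = A^*_{sa} + iA^*_{sa}$ by $\bb C$-linearity, so $F = 0$.

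For surjectivity I would reconstruct $F$ from a given $f \in AN_0(Q(A))$. Affineness together with $f(0)=0$ gives $f(s\chi) = s\,f(\chi)$ for $\chi \in Q(A)$, $s \in [0,1]$; this lets me set $F(\psi) := t\,f(\psi/t)$ on positive functionals $\psi$ (any $t \geq \|\psi\|$, the value being independent of $t$), deduce additivity $F(\psi + \chi) = F(\psi) + F(\chi)$ from affineness, extend to $A^*_{sa}$ by $F(\varphi_+ - \varphi_-) := F(\varphi_+) - F(\varphi_-)$ (well defined by additivity), and then to $A^*$ by $F(\varphi_1 + i\varphi_2) := F(\varphi_1) + iF(\varphi_2)$. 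A short computation shows this $F$ is $\bb C$-linear with $F|_{Q(A)} = f$, and the homogeneity estimate gives $|F(\varphi)| \leq \|f\|\,\|\varphi\|$ on $A^*_{sa}$ and $|F(\varphi)| \leq 2\|f\|\,\|\varphi\|$ in general; together with $\|f\| \leq \|F\|$ (as $Q(A)$ sits inside the unit ball) this yields equivalent norms, so $F \mapsto F|_{Q(A)}$ is a Banach space isomorphism $A^{**} \to AN_0(Q(A))$.

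It remains to identify the subspace corresponding to the canonical embedding $A \hookrightarrow A^{**}$. One inclusion is immediate: for $a \in A$ the function $\varphi \mapsto \varphi(a)$ is weak-* continuous on $Q(A)$ by definition of the topology, so $A$ maps into $AC_0(Q(A))$. The substance of the lemma is the converse: if $F \in A^{**}$ restricts to a continuous function on $Q(A)$, then $F \in A$. Recall that $A$ is exactly the set of weak-* continuous functionals on $A^*$, and that by the Krein--Smulian theorem a linear functional on $A^*$ is weak-* continuous as soon as its restriction to the unit ball $B_{A^*}$ is. So I must upgrade continuity on $Q(A)$ to continuity on $B_{A^*}$.

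The main obstacle is precisely this upgrade, and the naive route through the Jordan decomposition fails because $\varphi \mapsto (\varphi_+,\varphi_-)$ is not weak-* continuous. I would instead exploit compactness: $Q(A)$ is weak-* compact by Banach--Alaoglu, so the subtraction map $m : Q(A) \times Q(A) \to A^*_{sa}$, $(\psi_1,\psi_2) \mapsto \psi_1 - \psi_2$, is a continuous map from a compact space to a Hausdorff space, hence a closed, and therefore quotient, map onto its image $M$; by the Jordan decomposition $B_{A^*_{sa}} \subseteq M$. Since $F \circ m = f(\psi_1) - f(\psi_2)$ is continuous on $Q(A)\times Q(A)$ and $m$ is a quotient map, the induced function $F|_M$ is continuous, so $F$ is weak-* continuous on $B_{A^*_{sa}}$. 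The decomposition $\varphi = \varphi_1 + i\varphi_2$ with $\varphi_1,\varphi_2 \in B_{A^*_{sa}}$ depends weak-* continuously on $\varphi$, whence $F = F(\varphi_1) + iF(\varphi_2)$ is weak-* continuous on $B_{A^*}$. Krein--Smulian then gives $F \in A$, completing the bijection $A \to AC_0(Q(A))$.
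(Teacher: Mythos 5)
Your proposal is correct and follows essentially the same route as the paper: restriction of functionals gives the isomorphism $A^{**} \to AN_0(Q(A))$, and the continuity upgrade uses the subtraction map $m : Q(A)\times Q(A) \to A^*_{sa}$, which is closed (compact domain, Hausdorff codomain) and hence a quotient onto its image containing the unit ball by the Jordan decomposition, followed by Krein--Smulian. The only cosmetic difference is that the paper applies Krein--Smulian separately on the hermitian and anti-hermitian parts, while you recombine $\varphi = \varphi_1 + i\varphi_2$ on the ball first and apply it once on $A^*$; you also usefully spell out the ``straightforward'' isomorphism details that the paper delegates to references.
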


\begin{proof}
 The map $A^{**} \to AN_0(Q(A))$ is defined by restriction from $A^*$ to $Q(A)$. It is straightforward to check that it is a Banach space
 isomorphism (see \cite{Bich} first lemma or \cite{tesis} lemma 5.2). For the second part, we prefer the following proof instead of the one from
 \cite{Bich}. 
 
  Through the isomorphism we have $A \subset AC_0(Q(A))$. \mbox{We must prove} that equality holds. Take $f \in AC_0(Q(A))$. 
  We \mbox{have continuous maps}
\mbox{$Q(A) \times Q(A) \xto{\bar{f}} \bb C$}, \mbox{$\bar{f}(\varphi,\psi)=f(\varphi)-f(\psi)$}, and \mbox{$Q(A) \times Q(A) \xto m A^*_h$},
\mbox{$m(\varphi,\psi)=\varphi-\psi$} ($A^*_h$ is the hermitian part of $A^*$ with the $w^*$-topology). Since $Q(A) \times Q(A)$ is 
compact and $A^*_h$ Hausdorff, $m$ is closed, and therefore a quotient if we restrict the codomain to the image.

$$\xymatrix{Q(A) \times Q(A) \ar[r]^(.7)m \ar[dr]_{\bar{f}} & A^*_h \ar[d]^{\tilde f} \\ & \bb C}$$

The image of $m$ contains the unit ball, because every $\varphi \in A^*_h$ can be written as $\varphi_1 - \varphi_2$ with
$\varphi_1,\varphi_2 \geq 0$ and $\|\varphi\|=\|\varphi_1\|+\|\varphi_2\|$. Thus, $\tilde f$ is $w^*$-continuous on the unit ball. As a consequence of
Krein-Smulian theorem, $\tilde f$ is continuous on $A^*_h$. Analogously, it is continuous on $A^*_{ah}$ and therefore on $A^*$. 
Hence we conclude that $f$ comes from an element of $A$. 
\end{proof}

\begin{remark} \label{S(A) en vez de Q(A)}
 Taking $S(A)$ instead of $Q(A)$ we have: $A^{**} \simeq AN(S(A))$ and, for unital $A$, $A \simeq AC(S(A))$ (where $AN(S(A))$ is the
space of affine bounded $\bb C$-valued functions on $S(A)$ and $AC(S(A))$ the subspace of continuous functions).
\end{remark}

\begin{proof}
 It is straightforward to check that $AN_0(Q(A)) = AN(S(A))$. To obtain $AC_0(Q(A)) = AC(S(A))$ for unital $A$, we must prove that continuity on $S(A)$ implies continuity on $Q(A)$.
So take $f \in AN_0(Q(A))$ continuous on $S(A)$ and $\varphi_\mu \to \varphi$ in $Q(A)$. Evaluating at $1$, we have $\|\varphi_\mu\| \to \|\varphi\|$.
If $\varphi=0$ we have $|f(\varphi_\mu)|=\|\varphi_\mu\|.|f(\frac{\varphi_\mu}{\|\varphi_\mu\|})| \leq \|\varphi_\mu\|.\|f\|_{\infty} \to 0$ for those 
$\mu$ such that $\varphi_\mu \neq 0$ and $f(\varphi_\mu)=0$ if $\varphi_\mu = 0$; so $f(\varphi_\mu) \to 0$. If $\varphi \neq 0$, for large
enough $\mu$ we have $\varphi_\mu \neq 0$ and

$$f(\varphi_\mu)=\|\varphi_\mu\|f(\frac{\varphi_\mu}{\|\varphi_\mu\|}) \to \|\varphi\|f(\frac{\varphi}{\|\varphi\|})=f(\varphi)$$
\end{proof}

\begin{theorem}[Takesaki-Bichteler duality] \label{takduality}
 Every $C^*$-algebra $A$ is isomorphic to the set of $wot$-continuous maps $rep(A:H) \xto T B(H)$ 
 compatible with intertwiners that are partial isometries and such that $T(0)=0$. 
 Here $H$ is a Hilbert space large enough to contain strictly a copy of any cyclic representation of $A$.
\end{theorem}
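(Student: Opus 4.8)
The plan is to establish the bijection $A \leftrightarrow \{\text{admissible fields}\}$ given by $a \mapsto T_a$, $T_a(\pi) := \pi(a)$. One direction is immediate: each $T_a$ is $wot$-continuous by the definition of the topology on $rep(A:H)$, satisfies $T_a(0)=0$, and is compatible with every intertwiner because $S\pi_1(a)=\pi_2(a)S$ is the intertwining relation itself; moreover $a \mapsto T_a$ is an injective $*$-homomorphism, hence isometric, so it embeds $A$ as a $C^*$-subalgebra. The whole content is the converse: a $wot$-continuous $T$ with $T(0)=0$, compatible with partial-isometry intertwiners, must equal some $T_a$. Fix such a $T$; by Proposition \ref{isosparciales} it is compatible with all intertwiners and by Proposition \ref{essential} satisfies $T(\pi)=p_\pi T(\pi)p_\pi$. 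I would set $g(\pi):=\langle T(\pi)\xi,\xi\rangle$, continuous as $T$ followed by the $wot$-functional $S\mapsto\langle S\xi,\xi\rangle$, and seek to factor it through the quotient $\theta_\xi$:
$$\xymatrix{ rep(A:H) \ar[r]^{\theta_\xi} \ar[dr]_{g} & Q(A) \ar[d]^{f} \\ & \bb C }$$
To see that $f$ is well defined, given $\theta_\xi(\pi_1)=\theta_\xi(\pi_2)=\varphi$ I would put $\eta_i:=p_{\pi_i}\xi$, so $g(\pi_i)=\langle T(\pi_i)\eta_i,\eta_i\rangle$ and $\langle\pi_i(-)\eta_i,\eta_i\rangle=\varphi$; since each $\eta_i$ lies in its own cyclic subspace (approximate identity), uniqueness of GNS yields a partial-isometry intertwiner $S:\pi_1\to\pi_2$ with $S\eta_1=\eta_2$, and compatibility forces $g(\pi_1)=g(\pi_2)$. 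The same argument shows $f(\psi)=\langle T(\rho)\mu,\mu\rangle$ for any essential-space realization $\psi=\langle\rho(-)\mu,\mu\rangle$.

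Continuity of $f$ is then automatic from the universal property of the quotient map (Theorem \ref{main}a). Here lies the key simplification: $Q(A)$ is $w^*$-compact, so a continuous $f$ is automatically bounded, and this is exactly why the boundedness condition (Definition \ref{field}, item 2) can be dropped. One checks $f(0)=0$ using $T(0)=0$ on the zero representation. For affineness I would realize $\varphi=t\varphi_1+(1-t)\varphi_2$ inside a single representation $\pi_0$ by orthogonal cyclic vectors $\beta_1\perp\beta_2$ with $\langle\pi_0(-)\beta_i,\beta_i\rangle=\varphi_i$ and evaluate $f$ at the vector $\sqrt t\,\beta_1+\sqrt{1-t}\,\beta_2$; the cross terms $\langle T(\pi_0)\beta_1,\beta_2\rangle$ vanish because the projections onto the orthogonal invariant cyclic subspaces are endomorphisms of $\pi_0$ and hence commute with $T(\pi_0)$. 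Thus $f\in AC_0(Q(A))$, and Lemma \ref{lema} provides $a\in A$ with $f=\hat a|_{Q(A)}$; composing with $\theta_\xi$ gives $\langle T(\pi)\xi,\xi\rangle=\langle\pi(a)\xi,\xi\rangle$ for every $\pi\in rep(A:H)$.

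It remains to promote this scalar identity at the single vector $\xi$ to the operator identity $T=T_a$. I would work with the difference $D:=T-T_a$, again a field with $D(0)=0$, compatible with all intertwiners, $D(\pi)=p_\pi D(\pi)p_\pi$, and now $\langle D(\pi)\xi,\xi\rangle=0$ for all $\pi$. By homogeneity, polarization and Proposition \ref{essential} it suffices to prove $\langle D(\pi)\zeta,\zeta\rangle=0$ for every unit vector $\zeta$ in the essential space of $\pi$. Given such a $\zeta$, the functional $\psi:=\langle\pi(-)\zeta,\zeta\rangle$ is a state, and I would realize it by a cyclic representation $\pi'$ having $\xi$ itself as cyclic vector --- this is precisely the surjectivity construction in the proof of Theorem \ref{main}, where $\|\psi\|=1$ forces $\eta=\xi$. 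GNS uniqueness then produces a partial-isometry intertwiner $S:\pi'\to\pi$ with $S\xi=\zeta$, and compatibility gives $\langle D(\pi)\zeta,\zeta\rangle=\langle D(\pi')\xi,\xi\rangle=0$. Hence $D\equiv0$ and $T=T_a$, completing the surjectivity.

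I expect the main obstacle to be this final operator-recovery step, together with the well-definedness it generalizes. The quotient $\theta_\xi$ only records a single diagonal matrix coefficient at the fixed vector $\xi$, so all the geometry of each operator $T(\pi)$ must be reconstructed by transporting $\xi$ to arbitrary vectors of arbitrary representations; producing the partial-isometry intertwiners that make this possible is where GNS uniqueness and the hypothesis that $H$ be large enough are genuinely used. The affineness computation is the other place that essentially uses the intertwiner (equivalently, direct-sum) compatibility of fields, while the boundedness of $f$ --- the classical sticking point --- dissolves for free once continuity is obtained, since $Q(A)$ is compact.
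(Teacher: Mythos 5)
Your proposal follows the paper's own proof in its architecture: push the scalar function $\pi \mapsto \langle T(\pi)\xi,\xi\rangle$ down through the quotient $\theta_\xi$ of theorem \ref{main}a, check that the resulting $f$ lies in $AC_0(Q(A))$ (boundedness coming for free from compactness of $Q(A)$, which is indeed where the old boundedness hypothesis dissolves), and invoke lemma \ref{lema}. Two places where you supply detail are genuinely valuable, because the paper compresses them into ``in a straightforward manner'' or omits them outright. Your well-definedness argument (transport of $\xi$ by the partial-isometry intertwiner produced by GNS uniqueness, using that a vector of the essential space lies in its own cyclic subspace) is correct. So is your final recovery step $T=T_a$ via the difference field $D$, homogeneity, polarization, proposition \ref{essential} and the same transport; the paper stops at ``$f_T$ is an element of $A$'' and never spells out why this forces $T$ to be the field induced by that element --- and one cannot simply quote the identification of fields with $A^{**}$ there, since $T$ is not assumed bounded --- so your last paragraph fills a real hole. (Both you and the paper also implicitly use that propositions \ref{isosparciales} and \ref{essential}, though stated for ``fields'', never use condition 2 of definition \ref{field}; their proofs only need conditions 0 and 1.)

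The one step that does not survive scrutiny is your affineness argument. You realize $\varphi_1,\varphi_2$ by vectors $\beta_1\perp\beta_2$ with orthogonal cyclic subspaces inside a single $\pi_0\in rep(A:H)$; this forces $\dim H_{\varphi_1}+\dim H_{\varphi_2}\le\dim H$, which is automatic when $H$ is infinite-dimensional but can fail in the finite-dimensional situation the theorem allows: for $A=M_2(\bb C)$ the largest cyclic representation is $4$-dimensional, so $\dim H=5$ is admissible, yet two faithful states both have $4$-dimensional GNS spaces and $4+4>5$. The repair is to realize both functionals inside one cyclic representation rather than a direct sum. Write $\varphi=t\varphi_1+(1-t)\varphi_2$ with $0<t<1$; then $t\varphi_1\le\varphi$, so the Radon--Nikodym theorem for the commutant gives positive $h_1,h_2\in\pi_\varphi(A)'$ with $h_1+h_2=1$, $t\varphi_1=\langle\pi_\varphi(-)h_1^{1/2}\xi_\varphi,h_1^{1/2}\xi_\varphi\rangle$ and $(1-t)\varphi_2=\langle\pi_\varphi(-)h_2^{1/2}\xi_\varphi,h_2^{1/2}\xi_\varphi\rangle$. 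Embed $\pi_\varphi$ in $H$; compatibility with self-intertwiners (proposition \ref{isosparciales}) says precisely that $T(\pi_\varphi)$ commutes with $\pi_\varphi(A)'$, so your ``essential-space realization'' identity applied to the vectors $t^{-1/2}h_1^{1/2}\xi_\varphi$ and $(1-t)^{-1/2}h_2^{1/2}\xi_\varphi$ yields
$$tf(\varphi_1)+(1-t)f(\varphi_2)=\langle T(\pi_\varphi)h_1\xi_\varphi,\xi_\varphi\rangle+\langle T(\pi_\varphi)h_2\xi_\varphi,\xi_\varphi\rangle=\langle T(\pi_\varphi)\xi_\varphi,\xi_\varphi\rangle=f(\varphi).$$
With this substitution your proof is complete and coincides with the paper's.
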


\begin{proof}
 We already know that an element of $A$ defines a continuous field. Now take a $wot$-continuous $T$ as in the
 statement.

Since $\theta_\xi$ is surjective, there is an only way to define $f_T$ in order to make the following square commutative:

$$\xymatrix{rep(A:H) \ar[r]^(.6){\theta_\xi} \ar[d]_T & Q(A) \ar@{.>}[d]^{f_T} \\
B(H) \ar[r]^{\langle(-)\xi,\xi\rangle} & \bb C}$$

Compatibility of $T$ with intertwiners allows to prove that $f_T$ is well defined
and affine in a straightforward manner. Since $T(0)=0$, we have $f_T(0)=0$. Continuity of $T$ 
implies continuity of $f_T$ because $\theta_\xi$ is a topological quotient (theorem \ref{main}a). By lemma \ref{lema}, $f_T$ is an element of $A$.

\end{proof}

\begin{remark}
 For unital $A$, a field over $rep(A:H)$ only needs to be continuous on $rep_\xi(A:H)$ to be an element of $A$. This is because
of part (b) of theorem \ref{main} and remark \ref{S(A) en vez de Q(A)}.
\end{remark}

{\bf Acknowledgements:} I want to thank Rom\'an J. Sasyk for the support along these years.

\bibliographystyle{amsplain}
\bibliography{bib}

\bigskip

\bigskip

\bigskip

\small

{\bf Sergio Andr\'es Yuhjtman}

sergioyuhjtman@gmail.com

Universidad de Buenos Aires, FCEN, Departamento de Matem\'atica

Intendente Guiraldes 2160, Ciudad Universitaria, Pabell\'on I, C1428EGA

Ciudad Aut\'onoma de Buenos Aires, Argentina

\end{document}